\newtheorem{thm}{Theorem}[section]
\newtheorem{cor}[thm]{Corollary}
\newtheorem{rem}[thm]{Remark}
\begin{document}

\newcommand{\FFock}{\mathcal{F}}
\newcommand{\kil}{\mathsf{k}}
\newcommand{\Hil}{\mathsf{H}}
\newcommand{\hil}{\mathsf{h}}
\newcommand{\Kil}{\mathsf{K}}
\newcommand{\Real}{\mathbb{R}}
\newcommand{\Rplus}{\Real_+}

%

\newcommand{\bC}{{\mathbb{C}}}
\newcommand{\bD}{{\mathbb{D}}}
\newcommand{\bN}{{\mathbb{N}}}
\newcommand{\bQ}{{\mathbb{Q}}}
\newcommand{\bR}{{\mathbb{R}}}
\newcommand{\bT}{{\mathbb{T}}}
\newcommand{\bX}{{\mathbb{X}}}
\newcommand{\bZ}{{\mathbb{Z}}}
\newcommand{\bH}{{\mathbb{H}}}
\newcommand{\BH}{{\B(\H)}}
\newcommand{\bsl}{\setminus}
\newcommand{\ca}{\mathrm{C}^*}
\newcommand{\cstar}{\mathrm{C}^*}
\newcommand{\cenv}{\mathrm{C}^*_{\text{env}}}
\newcommand{\rip}{\rangle}
\newcommand{\ol}{\overline}
\newcommand{\td}{\widetilde}
\newcommand{\wh}{\widehat}
\newcommand{\sot}{\textsc{sot}}
\newcommand{\wot}{\textsc{wot}}
\newcommand{\wotclos}[1]{\ol{#1}^{\textsc{wot}}}
 \newcommand{\A}{{\mathcal{A}}}
 \newcommand{\B}{{\mathcal{B}}}
 \newcommand{\C}{{\mathcal{C}}}
 \newcommand{\D}{{\mathcal{D}}}
 \newcommand{\E}{{\mathcal{E}}}
 \newcommand{\F}{{\mathcal{F}}}
 \newcommand{\G}{{\mathcal{G}}}
\renewcommand{\H}{{\mathcal{H}}}
 \newcommand{\I}{{\mathcal{I}}}
 \newcommand{\J}{{\mathcal{J}}}
 \newcommand{\K}{{\mathcal{K}}}
\renewcommand{\L}{{\mathcal{L}}}
 \newcommand{\M}{{\mathcal{M}}}
 \newcommand{\N}{{\mathcal{N}}}
\renewcommand{\O}{{\mathcal{O}}}
\renewcommand{\P}{{\mathcal{P}}}
 \newcommand{\Q}{{\mathcal{Q}}}
 \newcommand{\R}{{\mathcal{R}}}
\renewcommand{\S}{{\mathcal{S}}}
 \newcommand{\T}{{\mathcal{T}}}
 \newcommand{\U}{{\mathcal{U}}}
 \newcommand{\V}{{\mathcal{V}}}
 \newcommand{\W}{{\mathcal{W}}}
 \newcommand{\X}{{\mathcal{X}}}
 \newcommand{\Y}{{\mathcal{Y}}}
 \newcommand{\Z}{{\mathcal{Z}}}

\newcommand{\sgn}{\operatorname{sgn}}
\newcommand{\rank}{\operatorname{rank}}

\newcommand{\Isom}{\operatorname{Isom}}

\newcommand{\qIsom}{\operatorname{q-Isom}}

\newcommand{\sIsom}{\operatorname{s-Isom}}

\newcommand{\sep}{\operatorname{sep}}




\title[Elementary proofs of Kempe universality]{Elementary proofs of Kempe universality}

\author{S.C. Power}
\address{Dept.\ Math.\ Stats.\\ Lancaster University\\
Lancaster LA1 4YF \\U.K.}
\email{s.power@lancaster.ac.uk}


\thanks{{2010  Mathematics Subject Classification: 14P05, 52C25, 55R80.}
\\
Key words and phrases: Kempe universality, linkages, infinite bar-joint frameworks} 


\maketitle

\begin{abstract}
An elementary proof is given to show that a parametrised algebraic curve
in the plane may be traced out, in the sense of A. B. Kempe, by a finite pinned linkage. Additionally it is shown that any parametrised continuous curve $\gamma:[0,1]\to \bR^2$  may be traced out by an infinite linkage where the valencies of the joints is uniformly bounded. We also discuss related Kempe universality theorems and give a novel correction of Kempe's original argument.
\end{abstract}

\section{Introduction}

In a succinct three page paper A. B. Kempe \cite{kem} indicated  ``a General Method of describing Plane Curves of the $n^{th}$ degree  by Linkwork".
 A \emph{linkage} is another name for a finite bar-joint framework $(G, p)$ in the usual sense (\cite{asi-rot}, \cite {gra-ser-ser}), with links being bars,
and a linkwork is a linkage which has some joints pinned in order to remove isometric planar motion. Kempe gave a method to show that any finite algebraic curve in the plane could be ``described" by the free motion of a joint of a planar pinned linkage. Precise versions of this assertion have been referred to as Kempe universality for the curves or algebraic sets under consideration.

The following theorem for polynomial curves is stated in  Connelly and Demaine \cite{con-dem} who propose that this is a precise version of what Kempe was trying to claim.

\begin{thm}\label{t:king} 
Let $C$ be a set in the plane that is the polynomial image of a closed interval. Then there is a planar bar-joint framework $(G,p)$ with some joints pinned such the set of positions of a particular joint over the  continuous  motion of $(G,p)$ is equal to $C$. 
\end{thm}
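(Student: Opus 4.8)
The plan is to reparametrise by an angle and then assemble the curve from elementary ``harmonic'' pieces. After an affine change we may take the domain to be $[0,1]$, and we substitute $t=(1-\cos\theta)/2$, which is a monotone homeomorphism of $[0,\pi]$ onto $[0,1]$. Writing $\tilde f(\theta)=f((1-\cos\theta)/2)$ and $\tilde g(\theta)=g((1-\cos\theta)/2)$ gives two polynomials in $\cos\theta$, and by the Chebyshev power-reduction identities $\cos^m\theta\in\operatorname{span}\{\cos(k\theta):k\le m\}$ these become real trigonometric polynomials $\tilde f(\theta)=\sum_{k=0}^{N}a_k\cos(k\theta)$ and $\tilde g(\theta)=\sum_{k=0}^{N}b_k\cos(k\theta)$. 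The single input of the whole mechanism will be the angle $\theta$ of a bar pinned at the origin $O$ with free end on the unit circle; this already supplies $\cos\theta$ as a horizontal coordinate, so no straight-line (Peaucellier-type) inversor is needed merely to feed in the parameter.

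I would then build three families of standard gadgets from rigid bars and pinned joints: (i) an \emph{angle multiplier}, which places a joint $A_k$ on the unit circle at angle $k\theta$, obtained by iterating an angle-adder (a rhombus linkage realising $\alpha,\beta\mapsto\alpha+\beta$); (ii) a \emph{translator} (parallelogram) together with a \emph{scalar multiplier} (pantograph), used to read the horizontal coordinate $\cos(k\theta)$ of $A_k$ onto a common track and to multiply it by the constant $a_k$; and (iii) an \emph{adder} of collinear coordinates, again a parallelogram device, to form the sums $\sum_k a_k\cos(k\theta)=\tilde f(\theta)$ and $\sum_k b_k\cos(k\theta)=\tilde g(\theta)$ as a horizontal and a vertical coordinate respectively. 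A final pair of perpendicular translators places the distinguished output joint $Z$ at $(\tilde f(\theta),\tilde g(\theta))$, so that as $\theta$ runs over $[0,\pi]$ the joint $Z$ runs over all of $C$. To pin the parameter range down to exactly $[0,\pi]$, and hence make the traced set exactly $C$ with its endpoints and nothing more, I would cap the driving angle with an auxiliary braced triangle acting as a stop at $\theta=0$ and $\theta=\pi$.

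The arithmetic above is routine; the genuine difficulty is that every parallelogram, rhombus and pantograph admits a spurious ``flipped'' (anti-parallelogram) configuration in which it computes the wrong relation, for instance placing $A_k$ at an angle other than $k\theta$ or adding the wrong combination of coordinates. This is precisely the gap in Kempe's original argument: without further constraint the configuration space of the assembled linkage is a union of the wanted branch and many parasitic branches, and the locus of $Z$ is then a proper superset of $C$. The heart of the proof is therefore a \emph{bracing} step, in which I would adjoin extra bars (doubling each parallelogram into a rigidified cell, in the spirit of the known corrections) that forbid every gadget from degenerating onto its wrong branch.

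I expect this bracing to be the main obstacle, for two competing reasons. The braces must be strong enough to eliminate all flip configurations, so that on the relevant component every gadget realises precisely its intended function and the output locus is contained in $C$; yet they must be weak enough not to over-constrain the mechanism, so that the one remaining degree of freedom survives and the driving angle still sweeps the whole of $[0,\pi]$, guaranteeing that the output locus is all of $C$. Verifying simultaneously that the braced configuration space has a single relevant connected component and that this component projects homeomorphically onto $[0,\pi]$ (equivalently, continuously and surjectively onto $C$) is the crux, and is where I anticipate the novel correction of Kempe's argument to reside.
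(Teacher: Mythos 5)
Your route is genuinely different from the one taken here. You reparametrise by $t=(1-\cos\theta)/2$, expand the coordinate polynomials in the Chebyshev basis, and then do Kempe-style \emph{angle} arithmetic: angle adders to reach $k\theta$, projections to extract $\cos(k\theta)$, pantographs and coordinate adders to form the two sums. The paper's proof of (the strengthened) Theorem \ref{t:speedmatch} avoids angle arithmetic entirely: the input is a \emph{separation} $t=|p_L(t)-p_2|$ on a line, and the monomials $|a_k|t^k$ are built directly by a separation multiplier (a similar-triangles gadget giving $|OC|=|OA|\,|OB|$, constrained by Peaucellier linearisers), concatenated along the axes with signs, and finally intersected via two virtual lines to place $p_C$ at $(x(t),y(t))$. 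What your approach buys is the classical elegance of the harmonic decomposition and the pleasant fact that the range of $\theta$ need not even be clipped to $[0,\pi]$, since $\cos\theta$ already sweeps exactly $[-1,1]$; what the paper's approach buys is that the only degenerate sub-linkages are parallelograms (fixed by prismatic bracing) and Peaucellier extremal positions (fixed by tethering), so the contraparallelogram pathology of Kempe's angle machinery never arises, and one gets for free the parametrised statement $q_C(t)=\gamma(s(t))$.

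That said, as written your argument has a real gap precisely where you locate the crux: you defer the bracing step rather than carry it out, and your proposed remedy (``doubling each parallelogram into a rigidified cell'') does not address the worst offender. The angle adder is the one gadget whose spurious branch is \emph{not} a parallelogram flip: Kempe's additor is built from contraparallelograms, and even an adder built from a separation copier forcing $|AB|=|A'B'|$ leaves a reflection ambiguity ($B'$ may land at angle $\phi-\theta$ instead of $\theta+\phi$). Parallelogram bracing is silent on both. One must either brace contraparallelograms by a separate device (Abbott) or arrange, as in Section \ref{ss:kempe}, that all angles occurring stay in a range such as $0<n\phi<\theta$, $(n+1)\theta<\pi/4$ where no degeneracy can occur; some such argument is needed before you can assert that the braced configuration space has a single relevant component. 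A second, smaller gap: ``reading the horizontal coordinate of $A_k$ onto a common track'' is a projection, and a translator plus pantograph does not perform it; you need a collinearity constraint (a lineariser, or the reflection trick pairing $k\theta$ with $-k\theta$ so the vertical components cancel) in addition. Both gaps are fixable with the building blocks of this paper, but until they are filled the locus of $Z$ is only shown to \emph{contain} $C$, not to equal it.
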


In Section 2 we give an elementary proof of a strengthened form of Theorem \ref{t:king} in which there is control of the parametrisation of the curve. Moreover the proof extends readily to rational curves. We also give a new infinite  linkage construction which, roughly speaking, simulates uniform approximation, and we use this to show that any \emph{continuous} image of $[0,1]$ in the plane is the Kempe trace, so to speak, of a joint of an infinite bar-joint linkage whose joint valencies are uniformly bounded. 
This improves the main result given Owen and Power in \cite{owe-pow-kempe}.

It has long been known that Kempe's arguments are incomplete
since Kempe's component linkages involve parallelograms and contraparallelograms with undesired bifurcation motion. See for example Hopcroft et al \cite{hop-et-al}. On the other hand Abbott \cite{abb} has recently shown that Kempe's original argument can be completed by applying various corrective bracing measures for these components. See also the sketch account of this in Demaine and O'Rourke \cite{dem-oro}. For completeness we discuss this approach and some alternative corrections in Section 4. Moreover in the final Section \ref{ss:kempe} we indicate a novel minimal correction involving only simple parallelogram bracings and a judiscious choice for the origin.

Theorem \ref{t:king} was originally obtained  for complex polynomial curves by  King (\cite{kin}, Theorem 2) who attributes the result to Kapovich and Millson \cite{kap-mil} and to Thurston. See also Jordan and Steiner \cite{jor-ste}. As we discuss later, these authors also consider how general compact real algebraic varieties may be represented  by pinned planar linkages, either isometrically or up to some form of isomorphism. See also King \cite{kin2}, \cite{kin3}. The arguments in these discussions differ from Kempe's approach and are lengthy, even for planar algebraic sets. 

The author would like to thank Herman Servatius for discussions and for suggesting the simple prismatic form of the separation translator.

\section{The main result}\label{s:first}
In the proof of Theorem \ref{t:speedmatch} we use four types of building block linkages, namely, \emph{linearisers}, \emph{separation translators}, \emph{separation copiers},  and \emph{separation multipliers}. Each of these is a planar bar-joint framework with a subset of joints satisfying a geometric property.
The novelty of the proof is in the use of linearising linkages. These simplify the construction of the crucial multiplier linkage and they also play a simple constraining role to construct the curve tracing joint from joints that trace the coordinate projections.

We first define the functions that these building block components realise, followed by the proof of Theorem \ref{t:speedmatch}.  We then detail how these components are constructed.
\medskip

A \emph{linearising linkage} is a linkage  which contains a finite subset of joints which are constrained to be co-linear to a fixed line and are otherwise capable of independent motion within a fixed bound.  
\medskip

A \emph{separation copier} is a linkage with two pairs of joints $A, B$ and $A', B'$ whose separation distances must coincide but whose continuous motions are otherwise free within a fixed distance of $A$. In other words, given a distance $R$ there is a distance copier such that if the joint $A$ is fixed then the joints $B$ and $A'$ are independently continuously movable in the disc centered at $A$ with radius $R$. For a given such position of $B, A'$ the remaining joint $B'$ is freely movable subject to the separation equality constraint, $|A'B'|=|AB|$. We view the separation distance $|AB|$ as an input which ensures the same separation distance for $A'$ and $B'$. 
\medskip

A \emph{separation translator} is a separation copier having the additional property  that the vectors $AB$ and $A'B'$ are parallel. In fact a general separation copier may be simply realised as the concatenation of a separation translator and a \emph{separation rotator} which is defined below.
\medskip

Finally, a \emph{separation multiplier} linkage has $3$ distinguished
joints $A,B,C$, with $A$ and $C$ constrained to the nonnegative $x$-axis and $B$ constrained to the nonnegative $y$-axis. The separations $|OA|$ and $|OB|$ are inputs that can vary independently within some bound and the separation $|OC|$ is the product of   $|OA|$ and $|OB|$. The constraining here is effected by linearisers.
\medskip

With these components we also define a \emph{separation powering linkage}, in which the input separation $|AB|$ determines the output separation $|A'B'|= |AB|^k$. This may be obtained by a concatenation of separation multipliers with connections by separation translators.
Also a \emph{ positive scalar multiplying linkage} is realizable from a separation multiplier linkage 
by adding a bar of that scalar length to one of the inputs.

We assume, as is usual, that bars and joints may  intersect so that the only constraint to the continuous motion of the unpinned joints is the  preservation of bar lengths.


\bigskip

\begin{thm}\label{t:speedmatch}
Let $C = \gamma([0,1])$ where $\gamma:[0,1]\to \bR^2$ is a polynomial curve. 
Then there is a finite planar bar-joint framework $(G,p)$,  with joints $p_1=(0,0), p_2=(1,0), p_L=(1,0), p_C= \gamma(0)$, with the following property.
For any continuous motion  $q(t), t\in [0,1]$, of $(G,p)$ with $q_i(t)=p_i$, for $i=1,2$, and $q_L(t) =(1+s(t),0)$ with
$s(t)\in [0,1]$ for all $t$, it follows that $q_C(t) = \gamma(s(t))$. Also there exists such a motion with $s(t)=t$ for all $t$.
\end{thm}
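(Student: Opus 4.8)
The plan is to build the tracing linkage by composing the four building-block linkages described above, following the classical Kempe strategy but exploiting the linearisers to keep the construction clean. First I would write the polynomial curve in coordinates as $\gamma(t)=(x(t),y(t))$ where $x(t)=\sum_{j=0}^m a_j t^j$ and $y(t)=\sum_{j=0}^n b_j t^j$. The driving joint $p_L=(1+s,0)$ encodes the parameter value $s\in[0,1]$ as the separation $|p_1 p_L|-1=s$, so the whole framework is a machine that reads off $s$ and must produce the point $\gamma(s)$ at the output joint $q_C$.

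The key steps would be as follows. First, from the input separation $s$, use a separation powering linkage to manufacture joints whose distinguished separations equal $s, s^2, \dots, s^{\max(m,n)}$; each power $s^k$ is obtained by iterating the separation multiplier, with separation translators carrying the value $s$ into each stage. Second, use positive scalar multiplying linkages to scale each power $s^j$ by the appropriate coefficient $a_j$ or $b_j$; since separations are nonnegative I would split each coefficient into its positive and negative parts and handle signs by choosing which side of the axis the translator deposits its output, or equivalently by an orientation-reversing translator. Third, sum the scaled terms: separations along a common line add by concatenating translators head-to-tail, so I would accumulate $\sum_j a_j s^j$ as a single $x$-displacement and $\sum_j b_j s^j$ as a single $y$-displacement, placing the partial sums on the $x$- and $y$-axes respectively. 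Finally, the linearisers enter decisively: a lineariser constrains an auxiliary joint to the vertical line through the accumulated $x$-coordinate and another to the horizontal line through the accumulated $y$-coordinate, and the output joint $q_C$ is pinned by bars to sit at the intersection of these two lines, which is exactly $(x(s),y(s))=\gamma(s)$. Throughout, the fixed bound $R$ on all separations is available because $s\in[0,1]$ forces every power and every partial sum into a compact range, so a single uniform disc radius suffices for all the copier/multiplier components.

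Two things then need to be verified to match the theorem statement. For the forward implication, I would check that the defining geometric constraints of each building block, read in sequence, force $q_C(t)=\gamma(s(t))$ for \emph{any} continuous motion satisfying the stated pinning conditions; this is a purely algebraic bookkeeping argument once the components are in place, since each block realises its stated functional relation as an exact constraint, not merely along one distinguished motion. For the existence of a motion with $s(t)=t$, I would exhibit the motion explicitly: set every joint to the position dictated by reading $s=t$ through the components, and confirm that $t\mapsto$ (this configuration) is continuous and respects all bar lengths, i.e. it is a genuine continuous flex of $(G,p)$.

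The main obstacle I anticipate is \emph{not} the algebra of composing the blocks but the control of spurious motions. Because the individual components—especially any parallelogram or contraparallelogram hidden inside the multiplier and translator—admit bifurcation branches, I must ensure that the constraint ``$q_i(t)=p_i$ for $i=1,2$ and $q_L(t)=(1+s(t),0)$ with $s(t)\in[0,1]$'' is strong enough to pin down the output to the correct branch, so that the functional relation $q_C=\gamma\circ s$ holds along every admissible motion and not just on the intended component. This is precisely where the linearising linkages earn their keep: by constraining the auxiliary joints to fixed lines rather than to rigid parallelogram arms, they suppress the unwanted flip motions at the level where the coordinate values are read off, so that the compactness of $[0,1]$ and the line constraints together eliminate the bifurcation ambiguity. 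Verifying that the linearisers genuinely accomplish this, rather than merely appearing to, is the delicate point of the argument.
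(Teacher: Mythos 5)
Your proposal follows essentially the same route as the paper's proof: decompose $\gamma$ into coordinate polynomials, build each power and coefficient by concatenated separation multipliers, scalar multipliers and translators, accumulate the signed terms along the axes by identifying successive output joints, and finally pin $q_C$ at the intersection of the vertical line through $(x(s),0)$ and the horizontal line through $(0,y(s))$ using linearisers. You also correctly identify the delicate point (suppressing bifurcation branches so the functional relations hold for \emph{every} admissible motion), which the paper handles via strict Peaucellier linearisers and prismatically braced parallelograms.
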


\begin{proof}Let $\gamma(t) = (x(t), y(t))$ and suppose that  $x(t)=a_0+a_1t+\dots +a_nt^n$. We may assume that $a_0=1$. The desired framework $\G= (G,p)$ is constructed around a triangular framework $\G_0$ with  joints $p_1=(0,0), p_2=(1,0), p_3=(0,1)$ and the $3$ connecting bars between them. 

Define first a  framework $\G_1$  by adding the joint $p_L=(1,0)$ and a lineariser for the triple to $p_1, p_2$ and $p_L$ so that the (pinned framework) motion of $p_L$ is constrained to 
linear motion on the line segment between $(1,0)$ and $(2,0)$.
The  joints $p_2$ and $p_L$ are regarded 
as an input pair with separation $t=|p_L(t)-p_2|$.

By the discussion above, for each $k=1,\dots, n$ with $a_k\neq 0$ there is a linkage $\H_k$ with input pair $p_2, p_L(t)$ and an output pair with separation $|a_k|t^k$. Each is obtained from a concatenation of  a powering linkage and a scalar multiplication linkage.  Moreover, for each $k$ we may constrain the output joints to the $x$-axis  by adding linearisers. 
Identify an output joint of $\H_k$ with an output joint of $\H_{k+1}$  in the following manner. We may assume that the coefficients $a_k$ are nonzero, the general case being similar. 
\medskip

(i) The output pair for $\H_1$ has one joint fixed at $(1,0)$ and the other output joint has motion to the right (resp. left) of $(1,0)$ if the sign of $a_1$ is positive (resp. negative).

(ii) For each $k=1,2,\dots ,n-1$, one output joint of $\H_{k+1}$ is identified with the free output joint of $\H_k$ while the other output joint has relative motion to the right (resp. left) if the sign of $a_{k+1}$ is positive (resp. negative).
\medskip

It follows that the unshared  joint of the final output pair is located at $(x(t),0)$ when $p_L$ is located at $(1+t,0)$.
\begin{center}
\begin{figure}[ht]
\centering
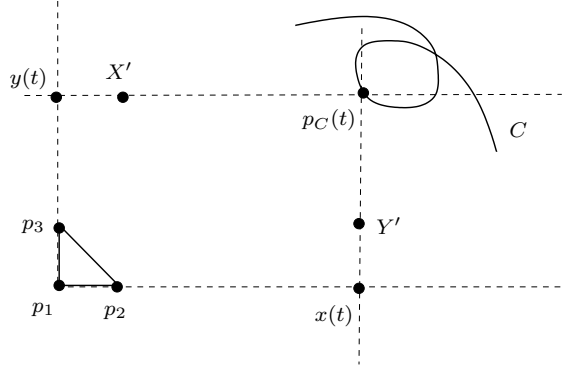
\caption{The determination of $p_C(t) =(x(t),y(t))$.}
\label{f:polydraw2}
\end{figure}
\end{center}
In the same way add further bars and joints so that the input separation  $t = |p_L(t)-p_2|$ determines an output joint at the position $(0,y(t))$. Finally,  add  separation translators and linearisers so that the output joints $(x(t),0)$ and $(0,y(t))$ determine a joint $p_C(t)$ at position $(x(t), y(t))$. This is indicated in Figure \ref{f:polydraw2}. The joint $X'= (1,y(t))$ (resp. $Y'=(x(t),1)$) is determined by a separation translator (not shown) to the fixed input pair $p_1, p_2$ (resp. $p_1, p_3$). Two  linearisers (not shown) maintain the colinearity of $p_C(t)$ with the pair $(0,y(t)), X'$ and the pair $(x(t),0), Y'$. The resulting linkage has the desired properties.
\end{proof}

Note that by swapping the output for an input the separation multiplier linkage becomes a \emph{separation divider linkage}. It follows  that one may prove the rational curve version of this theorem in the same manner. The methods also extends to polynomial and rational curves in higher dimensions. 

\begin{rem}
For a contrasting approach we note that Gallet et al \cite{gal-et-al} develop interesting  factorisation methods in noncommutative algebra for the construction of explicit linkages and in this way obtain a proof of Kempe universality for \emph{rationally parametrised curves}. See also Li et al \cite{li-et-al}. Part of the interest here is to generate rational curves efficiently and succinctly.

We also remark that algorithmic aspects of Kempe universality are also of interest in computational geometry and applied mathematics. See, for example, Abbott \cite{abb}, Gallet et al \cite{gal-et-al}, Gao et al \cite{gao-et-al}, Kobel \cite{kob} and Li et al \cite{li-et-al}. 
\end{rem}

\subsection{Linearising linkages}
Figure \ref{f:peaucellierB} shows  the famous
\emph{Peaucellier linkage}. Suppose that the joints at $p_1$ and $p_2$ are pinned. 
Then the geometry ensures that between two extremal positions the joint $q_1$ must move on a vertical line segment which is symmetric about a line through $p_1p_2$. At the extremal positions the linear motion breaks down. By adding a further "tethering joint" and bars connecting it to $p_3$ and $q_1$ one may obtain a \emph{strict} Peaucellier linkage in which the extremal positions are not achieved. In this case the only bifurcation in a continuous motion is the evident bifurcation for the tethering joint.


By replicating the mobile  structure attached to $p_1$ and $p_2$, with any finite multiplicity $n$, we obtain an \emph{$n$-fold Peaucellier  linkage}. This has $n$ joints, $q_1, \dots ,q_n$, whose relative motions, away from the extremal positions, are constrained to be co-linear. In this way we obtain a \emph{linearising linkage}.



\begin{center}
\begin{figure}[ht]
\centering
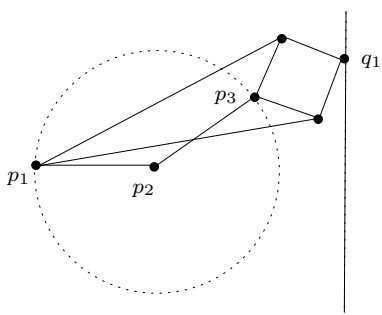
\caption{The Peaucellier linkage.}
\label{f:peaucellierB}
\end{figure}
\end{center}

\subsection{The separation multiplier}
The schematic diagram in 
Figure \ref{f:multiplier} underlies the construction of a separation multiplier linkage. The joints at the origin $O$ and at $X=(1,0), Y=(0,1)$ are connected by bars. Three linearising linkages (not shown)  constrain each of the set of joints  
 $\{O,X,A,C\}, \{O,Y,B\}$ and $\{B,A',C\}$ to be colinear.
Finally, the joint at $A'$ is
defined by a \emph{separation translator}, as constructed below, with input pair $\{Y, A\}$ and output pair $\{B, A'\}$. The resulting  geometry ensures that  
$|OC|=|OA||OB|$ for all input values  $ 0 \leq |OA|, |OB|\leq R$, for some $R>0$. 

\begin{center}
\begin{figure}[ht]
\centering
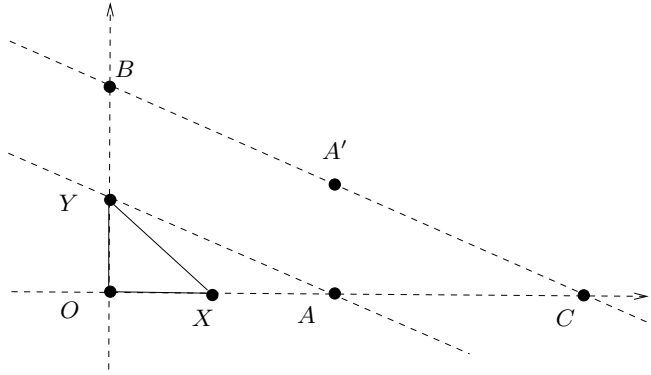
\caption{Construction scheme for a separation multiplier: $|OC|=|OA||OB|$.}
\label{f:multiplier}
\end{figure}
\end{center}

The parallel line principle here also underlies  Jordan and Steiner's  more subtle multiplicator linkage \cite{jor-ste}.

\subsection{Separation translator}
A separation translator, of the type we have defined above, may be constructed as indicated in Figure \ref{f:copierC}. Each of the $4$ parallelograms of the left hand bar-joint framework is to be prismatically braced, as indicated in the right hand figure, in order to avoid unwanted bifurcations when the parallelograms assume flat positions. The equality of $|BX|$ and $|AX|$ ensures that the separation distance between $A$ and $B$  ranges in the entire interval $0 \leq |AB|\leq 2|AX|$.

\begin{center}
\begin{figure}[ht]
\centering
\includegraphics[width=9cm]{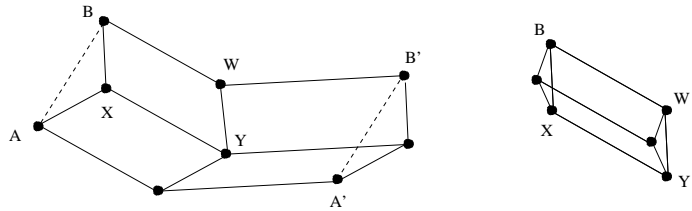}
\caption{The separation translator.}
\label{f:copierC}
\end{figure}
\end{center}
\begin{center}
\begin{figure}[ht]
\centering
\includegraphics[width=4cm]{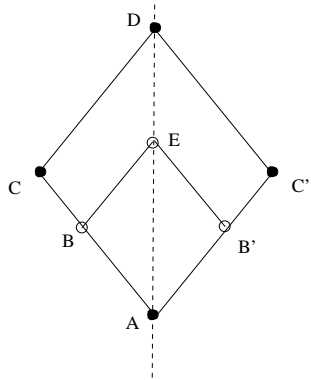}
\caption{Construction scheme for  a separation rotator:  
$|AB|=|AB'|$.}
\label{f:rotator}
\end{figure}
\end{center}

\subsection{Separation rotator}
Figure \ref{f:rotator} is a schematic diagram for the construction of a \emph{separation rotator}. It includes a $4$-bar parallelogram linkage with joints $A,C,D,C'$, whose $4$ bars have equal length. This parallelogram is  prismatically braced, as in the separation translator.
 The joint $E$ joins the equal length bars $BE$ and $B'E$, and there are three $3$-fold linearising linkages (not shown) which constrain $B$ (resp. $E$, resp. $B'$) to be colinear with
$AC$ (resp. $AD$ resp. $AC'$).

\section{Continuous curves}

In Owen and Power  \cite{owe-pow-kempe} 
we made use of Kempe universality for polynomial curves to construct an infinite pinned linkage to realise any given continuous curve $f : [0, 1] \to  \bR^2$ as the motion of one of its joints. 
Moreover the curve tracing is achieved strictly in the sense that the  motion of the joint is uniquely determined by an input parametrisation.

The structure of the proof in \cite{owe-pow-kempe} is as follows. 
One starts with a sequence of polynomially parametrised curves $t \to q_k(t)$, for $k=1,2, \dots$, which  approximate the curve $t \to f(t)$ uniformly.
An  increasing sequence of finite frameworks is assembled, each with the same two pinned joints,  such that the $k^{th}$ addition contains a joint $q_k$ which traces the curve
$t \to q_k(t)$.
It is then possible to add additional  bars to  tether a  joint $p_C$ to every $q_k$   so that  at time $t$ its position $p_C(t)$ is equal to the limit $\lim_k q_k(t)$. This infinite tethering contruction implies that the infinite framework has joints with infinite degree, that is, which are incident to infinitely many bars.
We also remark that a
parameterised form of Theorem \ref{t:king} was used without proof in \cite{owe-pow-kempe} and this incompleteness is  remedied by Theorem \ref{t:speedmatch}.

In the next theorem we obtain a stronger result wherein the continuous curve tracing is effected by a countably infinite bar-joint framework which has a uniform bound on the degrees of the joints. In the proof the infinite tethering is done indirectly by sequences of constraints to two virtual lines through $p_C$ which are parallel to the coordinate axes, and these virtual lines are created by sequences of linearisers.

\begin{thm}\label{t:kempeContsFiniteDegree}
Let $\gamma : [0, 1] \to  \bR^2$ be a continuous function.
Then there is a countable planar bar-joint framework $(G,p)$, of finite valency, with joints $p_1=(0,0), p_2=(1,0), p_L=(1,0), p_C= \gamma(0)$ such that the following holds.
For any continuous motion  $q(t), t\in [0,1]$, of $(G,p)$ with $q_i(t)=p_i$, for $i=1,2$, and $q_L(t) =(1+s(t),0)$ with
$s(t)\in [0,1]$ for all $t$, it follows that $q_C(t) = \gamma(s(t))$. Also there exists such a motion with $s(t)=t$ for all $t$.
\end{thm}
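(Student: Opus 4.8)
The plan is to combine a uniform polynomial approximation of $\gamma$ with the finite construction of Theorem \ref{t:speedmatch}, replacing the problematic infinite tethering of a single joint by a pair of infinite \emph{colinearity ladders} which each use every joint only finitely often. First I would invoke Weierstrass to choose polynomial curves $\gamma_k=(x_k,y_k)$ with $\sup_{\sigma\in[0,1]}|x_k(\sigma)-x(\sigma)|\le 2^{-k-1}$ and similarly for $y_k$, where $\gamma=(x,y)$. For each $k$, Theorem \ref{t:speedmatch} supplies a finite sub-framework $\G_k$ producing a joint $S_k$ at $(x_k(s(t)),0)$ on the $x$-axis, driven by an input separation equal to $s(t)$. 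To keep the valency of the input joints finite I would not attach every $\G_k$ to $p_2,p_L$ directly, but relay the input segment of length $s(t)$ along a chain of separation copiers $(p_2,p_L)\to(A_1,B_1)\to(A_2,B_2)\to\cdots$, feeding $(A_k,B_k)$ into $\G_k$; each joint in the chain then meets only boundedly many bars.

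The heart of the argument is the construction of a \emph{virtual vertical line} at $x=x(s(t))$ that pins the first coordinate of $p_C$ by a finite-valency joint. Using a separation translator I raise $S_k$ to $S_k^{(k)}=(x_k(s(t)),k)$, and I introduce joints $u_1,u_2,\dots$ subject to three families of constraints: (i) $u_k$ is confined to the fixed horizontal line $y=k$ by a lineariser; (ii) $u_k$ is confined to the closed disc of radius $\delta_k=2^{-k}$ about $S_k^{(k)}$ by a short two-link arm anchored at $S_k^{(k)}$ whose links have length $\delta_k/2$; and (iii) the whole sequence is forced colinear by a chain of linearisers, $\L_k$ making $u_k,u_{k+1},u_{k+2}$ colinear, exactly as colinearity of moving triples is imposed in the proof of Theorem \ref{t:speedmatch}. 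Since consecutive $u_k$ lie on distinct horizontals they determine a common line $V$, and constraints (i)--(ii) force the abscissa $c_k$ of $u_k$ to satisfy $|c_k-x_k(s(t))|\le\delta_k$, whence $c_k\to x(s(t))$ and in particular the $c_k$ are bounded. A line through points of unbounded height and bounded abscissa must be vertical, so $V=\{x=c\}$ for a single $c$; as $c=c_k$ lies within $\delta_k$ of $x_k(s(t))$ for every $k$, the nested intervals force $c=x(s(t))$. Thus $u_1=(x(s(t)),1)$, and making $p_C$ colinear with $u_1,u_2$ through one further lineariser pins the abscissa of $p_C$ to $x(s(t))$. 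A mirror-image ladder, with joints $v_k$ confined to the fixed verticals $x=k$ and to discs about $(k,y_k(s(t)))$, produces a virtual horizontal line $y=y(s(t))$ and pins the ordinate of $p_C$. The joint $p_C$ meets only the two linearisers joining it to $u_1,u_2$ and to $v_1,v_2$, so it has finite valency, and every auxiliary joint lies in boundedly many gadgets.

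Consistency and the parametrisation claim are then routine. Setting all $u_k=(x(t),k)$ and $v_k=(k,y(t))$, with $s(t)=t$, satisfies every constraint with strict slack $|x(t)-x_k(t)|<\delta_k$ in the discs, and the intermediate joints of the arms, copiers and linearisers may be moved continuously; this exhibits a motion realising $q_C(t)=\gamma(t)$. Conversely the forcing argument above applies verbatim at each time to any admissible motion, giving $q_C(t)=\gamma(s(t))$. The step I expect to be the main obstacle is the verticality squeeze: one must check that the infinitely many colinearity constraints, together with the vanishing discs and the unbounded heights, genuinely transmit the limiting value $x(s(t))$ from the tail of the ladder back to the finite-index joint $u_1$, while those same constraints remain jointly satisfiable so that the framework is not over-braced. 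Taking the approximation error $\le 2^{-k-1}$ against discs of radius $\delta_k=2^{-k}$ keeps the target configuration strictly interior to every disc, and this single choice secures both the forcing and the existence of a continuous motion.
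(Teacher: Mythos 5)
Your proposal is correct and follows essentially the same route as the paper: Weierstrass approximants fed through Theorem \ref{t:speedmatch}, with the infinite tethering replaced by two colinear ladders of joints (one per coordinate), each joint tied by a vanishing-length tether to the $k$-th approximant's output and forced colinear by chained linearisers, so that the resulting virtual vertical and horizontal lines through $p_C$ pin its coordinates to $(x(s(t)),y(s(t)))$ while every joint keeps bounded valency. Your explicit squeeze argument that unbounded heights plus bounded abscissae force the ladder's line to be vertical makes precise a step the paper passes over quickly, but the underlying construction is the same.
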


\begin{proof}
Without loss of generality we may assume that the range of $\gamma$ lies in the open unit square $(0,1)\times (0,-1)$. Consider first an $L$-shaped rigid grid framework $\G_1$ whose joints are located at $(k,0), (k,-1)$ and
$(0,-(k+2)), (1, -(k+2))$, for $k=0,1,2, \dots $. The bars are given by the horizontal and vertical unit length bars between these joints (shown in Figure \ref{f:biggrid}) together with  diagonal bars in each of the unit square subframeworks (not shown).

Using countably many  $3$-fold linearisers (no linearisers are shown) we can readily arrange that there are joints $q_0, q_1, q_2, \dots $, which lie on the same vertical line through $p_C=\gamma(0)$ and, respectively, lie on the horizontal lines with $y$ intercept $0, -1, -2, \dots $. We similarly constrain the sequence of joints $r_0, r_1, r_2, \dots,$ as shown.
Also, a lineariser is added to constrain the joint $p_2$ to the line $y=-1$, and sequences of linearisers are added to constrain joints $p_{2,k}$  to lie on $y=-k$, for $k=2,3,\dots ,$, and to have the same $x$-coordinate as $p_2$. (These joints, which lie vertically below $p_2$, are not shown. This completes the constructions of $\G_2 \supset \G_1$. 
Note that in any motion  which fixes the joint at the origin and $ p_1$ the joints $p_{2,k}$ move horizontally in unison with $p_2$.

\begin{center}
\begin{figure}[ht]
\centering
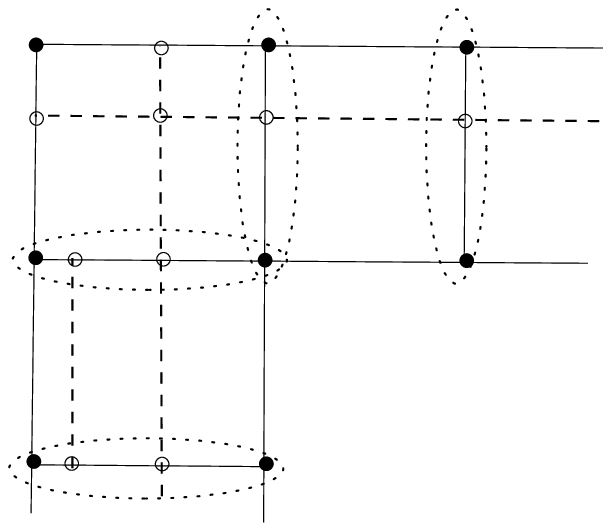
\caption{Construction scheme for $p_C$.}
\label{f:biggrid}
\end{figure}
\end{center}

For the next phase of the construction we consider the continuous coordinate functions $x(t)$ and $ y(t)$ of $\gamma(t)$.

By the uniform density of polynomial functions there is a sequence $x_1(t), x_2(t), \dots $ of polynomials such that for all $t$ in $[0,1]$ we have
$|x(t)-x_k(t)|\leq 1/k$ for all $t, k$.
By Theorem \ref{t:speedmatch} for each $k$ there is a finite framework $\H_k$ with input joints  $(0,-k)$ and  $p_{2,k}$, and with output joint $q_{C,k}$ (not shown in Figure \ref{f:biggrid}) at the initial position $(x_k(0),-k)$,  such that there is a unique continuous motion of $\H_k$ 
such that for all $t\in [0,1]$,
\[
p_{2,k}(t) = p_{2,k}+(t,0), \quad q_{C,k}(t) = (x_k(t),-k) 
\] 


The construction of the framework $\H_k$ is completed by adding tethering joints with bars  of length $1/k$ to the joints $q_k$ and $q_{C,k}$. Figure \ref{f:tether} indicates this tethering for $k=1$. These additions are chosen to guarantee that in any continuous motion of the union of $\G_1$ and $\H_k$ we have the inequalities $|q_k(t)-q_{C,k}(t)|\leq 1/k$.

Since all the joints $q_k$ move horizontally in unison, with the same $x$-coordinate, and since  $1/k$ tends to zero as $k$ tends to infinity, it follows that this $x$-coordinate for time $t$ must coincide with $x(t)$.
\begin{center}
\begin{figure}[ht]
\centering
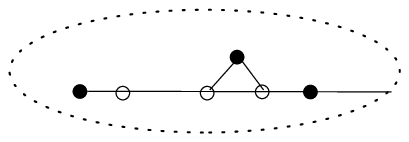
\caption{Tethering $q_1(t)$ to $q_{C,1}(t)$.}
\label{f:tether}
\end{figure}
\end{center}

In a similar way the graphs $\K_1, \K_2, \dots $ are constructed for a sequence of polynomial approximants to $y(t)$. The framework $(G,p)$ is defined to be the union of $\G_1$ and the frameworks $\H_k, \K_k$ for all $k$, together with linearisers for the triples $q_0, q_1, p_C$ and $r_0, r_1, p_C$ which constrain a joint $p_C$ to the position $(x(t), y(t))$ at time $t$, as required.
\end{proof}



Kempe's 1877 book, "How to draw a straight line", with its somewhat whimsical introduction, celebrates all manner of linkages and especially those that convert circular motion (a precise ruler and compasses construction) to the tracing of a straight line. Of the latter he asserts that:
\emph{If we are to draw
a straight line with a ruler, the ruler must itself have a
straight edge ; and how are we going to make the edge
straight ? We come back to our starting-point."} 

Bearing in mind the parameter control in the argument above and also the existence of continuous area filling curves, we obtain the following corollary which, in the same spirit, shows "How to draw a square".


\begin{cor} 
Let $R$ be the solid square
$[-\pi/4, \pi/4]\times [-\pi/4, \pi/4]$, let $S$ be its square boundary curve, and
let $C$ be the circle of unit radius.
\medskip

(i) There is a bounded countable pinned planar bar-joint framework with bounded valence which has joints $p_C, p_S$ whose continuous traces are $C$ and $S$, and where these sets are traced with equal speed.

(ii) There is a bounded countable pinned planar bar-joint framework
with bounded valence which has joints  $p_C, p_R$ whose continuous traces are $C$ and $R$.

\end{cor}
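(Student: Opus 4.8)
The plan is to realise each target set as the Kempe trace of a suitable continuous curve and to drive the two tracings from a single input joint. First I would fix constant-speed parametrisations $\gamma_C,\gamma_S:[0,1]\to\bR^2$ of the circle $C$ and of the square boundary $S$. The point of the numbers in the statement is that $C$ has circumference $2\pi$ while $S$ has perimeter $4\cdot(\pi/2)=2\pi$, so the two constant-speed parametrisations share the common speed $2\pi$; tracing both off the same parameter $s(t)=t$ will then trace them at equal speed. For part (ii) I would keep $\gamma_C$ and instead take a continuous surjection $\gamma_R:[0,1]\to R$ onto the solid square, namely a space-filling curve (such as a Peano or Hilbert curve) rescaled to fill $R=[-\pi/4,\pi/4]^2$, so that the trace of its output joint is all of $R$.

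Next I would apply Theorem \ref{t:kempeContsFiniteDegree} separately to each curve. By that theorem each application yields a countable, finite-valence framework carrying the \emph{same} normalised input data, pinned joints $p_1=(0,0)$, $p_2=(1,0)$ and driving joint $p_L$, together with an output joint ($p_C$ in one, $p_S$ or $p_R$ in the other) whose position at parameter value $s(t)$ is the corresponding curve point. I would then identify the three input joints $p_1,p_2,p_L$ of the two frameworks and take the union, invoking the standing convention that bars and joints may cross without being connected, so that the two sub-assemblies interact only through the shared driving data. The canonical motion $s(t)=t$ supplied by Theorem \ref{t:kempeContsFiniteDegree} then traces $C$ and $S$ (resp.\ $C$ and $R$) simultaneously, and in case (i) both at speed $2\pi$. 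Valence stays uniformly bounded because the only joints receiving bars from both sub-assemblies are $p_1,p_2,p_L$, each of which picks up only finitely many.

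The main obstacle is the word \emph{bounded}: the framework produced by Theorem \ref{t:kempeContsFiniteDegree} is \emph{not} bounded, since its scaffolding grid marches off to $y=-\infty$ in order to supply infinitely many comparison tracks $y=-k$ carrying the joints $q_k,r_k$ and the approximant sub-frameworks $\H_k,\K_k$. To repair this I would compress the whole construction into a bounded box. Because the parameter ranges over $t\in[0,1]$, every power $t^{j}$ and hence every intermediate separation occurring inside a fixed $\H_k$ lies in a bounded interval, so each individual $\H_k$ (and each $\K_k$) already occupies a bounded region; only the \emph{stacking} is unbounded. I would therefore relocate the comparison tracks onto a summable sequence of levels accumulating at a finite height, deliver the common parameter to each sub-framework by bounded separation copiers rather than by one infinite grid column, and overlay the infinitely many now-bounded sub-frameworks inside a single bounded strip, which is permissible precisely because bars may intersect. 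As the tolerances $1/k$ still tend to $0$, the limiting collinearity constraints forcing $p_C$ onto $(x(t),y(t))$ are unaffected, so the framework is confined to a bounded region while retaining finite valence. It is this compression step, and not the curve-tracing itself, where the care is required.
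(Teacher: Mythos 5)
Your proposal takes essentially the same route as the paper, which offers no written proof beyond the remark preceding the corollary: apply Theorem \ref{t:kempeContsFiniteDegree} to constant-speed parametrisations of $C$ and $S$ (the side length $\pi/2$ is chosen precisely so that both have length $2\pi$) and, for (ii), to a continuous area-filling curve onto $R$, gluing the two frameworks along the shared pinned joints and driver $p_L$. Your extra paragraph on boundedness identifies a point the paper glosses over, since the grid in the proof of Theorem \ref{t:kempeContsFiniteDegree} is indeed unbounded; but note that your compression step still needs a \emph{uniform} bound on the diameters of the sub-frameworks $\H_k$, $\K_k$, which is not automatic because the approximating polynomials have growing degrees and coefficients, and the multiplier relation $|OC|=|OA||OB|$ is not scale-invariant, so the overlaying of all $\H_k$ into one bounded strip requires a further argument.
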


\section{Algebraic curves.}\label{s:kempe}

The \emph{continuous trace} of a joint in a pinned linkage is defined  to be the set of positions it may occupy in any  continuous  motion. In particular, such a set is pathwise connected. Let $f(x,y)$ be a real polynomial with zero set $V(f)$ in $\bR^2$.

\begin{thm}\label{t:kempeuniversality}
Let $C$ be a nonempty pathwise-connected component of 
$V(f)\cap D$ where $D$ is a closed disc. Then there exists a pinned planar bar-joint framework $(G,p)$ such that the continuous trace of one of the joints is equal to $C$.
\end{thm}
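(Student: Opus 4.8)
The plan is to realise the set $C$ as the continuous trace of a joint by combining Theorem~\ref{t:speedmatch} (the parametrised polynomial tracer) with a covering/concatenation argument. The obstacle is that $C$ is only given implicitly as a connected component of an algebraic set intersected with a disc, so it need not be the image of a single polynomial map and may fail to be smooth at finitely many singular points of $V(f)$. The strategy is therefore to build a linkage whose trace is forced to lie \emph{inside} $V(f)\cap D$, and then to argue that connectedness together with reachability of the motion makes the trace equal to the whole component $C$.

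\begin{proof}[Proof sketch]
First I would produce a constraining linkage whose curve-tracing joint $p_C$ is confined to $V(f)$. The key device is the \emph{separation multiplier}, which computes products of input separations; by concatenating multipliers, scalar multipliers, and separation translators as in the proof of Theorem~\ref{t:speedmatch}, one can build, from two input joints carrying the coordinates $x$ and $y$ of a free joint $p_C$, an output joint whose displacement realises the value $f(x,y)$. Pinning this output joint to the value $0$ (by a rigid bar to a pinned reference joint) enforces $f(x,y)=0$, so the reachable positions of $p_C$ lie in $V(f)$. Intersecting with the closed disc $D$ is achieved by a further bounding linkage—a Peaucellier-type lineariser or a rigid triangular cage keeping $|Op_C|\le \rho$ where $\rho$ is the radius of $D$—so that the trace of $p_C$ is contained in $V(f)\cap D$.

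The second step is to single out the \emph{pathwise-connected component} $C$ rather than all of $V(f)\cap D$. Here I would use the fact, recalled just before the theorem, that the continuous trace of a joint is automatically pathwise connected: starting the motion at a chosen initial placement $q_C(0)\in C$, every position reachable by a continuous motion stays within the component of $V(f)\cap D$ containing that initial point, namely $C$. Thus the trace is a pathwise-connected subset of $V(f)\cap D$ lying in $C$. To obtain equality, I would show every point of $C$ is actually reachable: since $C$ is pathwise connected, any target point is joined to $q_C(0)$ by a path in $C\subseteq V(f)$, and this path can be followed by the framework because the constraint locus of $p_C$ is exactly $V(f)\cap D$ near each point traversed, leaving $p_C$ free to move along $V(f)$. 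Care is needed at singular points of $f$, where $V(f)$ may branch; there the multiplier and lineariser subframeworks may admit bifurcating motions, but since we only need \emph{some} continuous motion realising each point, the bifurcation branches supply exactly the freedom to pass through or around singularities while remaining on $V(f)$.

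\emph{The main obstacle} I anticipate is controlling the realisability of the value $f(x,y)$ by separations, which are nonnegative quantities, whereas $f$ takes both signs. This is handled exactly as in Theorem~\ref{t:speedmatch}: each monomial $a_\alpha x^{p}y^{q}$ is produced as an absolute separation $|a_\alpha|\,|x|^{p}|y|^{q}$ and then added with the correct sign by choosing, via the output-joint identification convention (left/right displacement for negative/positive coefficients), whether its contribution is laid off in the positive or negative coordinate direction; the signed sum is then constrained to equal $0$. A secondary difficulty is ensuring the bounding to $D$ does not itself disconnect the motion or exclude boundary points of $C$; taking $D$ slightly larger in the construction and then re-confining, or arranging the lineariser bounds generously, removes spurious obstructions at $\partial D$. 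The remaining verifications—that the concatenated multiplier genuinely evaluates $f$ on the admissible range and that the linearisers maintain the requisite colinearities throughout the motion—are routine given the building blocks established in Section~\ref{s:first}.
\end{proof}
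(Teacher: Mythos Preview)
Your approach is workable but takes a genuinely different route from the paper. You propose to evaluate $f(x,y)$ directly using the separation multipliers of Section~\ref{s:first}, extracting $x$ and $y$ from a free joint $p_C$, computing an output separation equal to $f(x,y)$, and pinning that output to zero. The paper instead follows Kempe's classical construction: substitute $x=a\cos\theta+b\cos\phi$, $y=a\sin\theta+b\sin\phi$ via a two-bar linkage, expand $f$ into the trigonometric form $A_{0,0}+\sum A_{r,s}\cos(r\theta+\sigma_{r,s}s\phi+\psi_{r,s})$, realise each summand angle with an \emph{angle adder} (a new building block introduced in this section), lay the resulting vectors end to end with separation translators so that the terminal joint $p_L$ has $x$-coordinate equal to the sum, and then constrain $p_L$ to the line $x=-A_{0,0}$ while tethering $p_C$ to $D$. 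The paper's choice is deliberate: Section~\ref{s:kempe} is about presenting and completing Kempe's 1876 argument, and the later Subsection~\ref{ss:kempe} analyses exactly these angle-adding components; your direct-evaluation method, while arguably more uniform with Section~\ref{s:first}, would sidestep that historical narrative entirely.

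One point in your sketch needs tightening. You say each monomial is produced as the absolute separation $|a_\alpha|\,|x|^p|y|^q$ with the sign handled by the left/right convention of Theorem~\ref{t:speedmatch}. But in that theorem the base variable $t\in[0,1]$ is nonnegative, so only the coefficient sign matters; here the sign of $x^py^q$ itself depends on the signs of $x$ and $y$, which the left/right convention cannot track dynamically. The paper handles this by first translating so that $C$ lies in the first quadrant; you should make the same reduction explicit. Your reachability paragraph (lifting a path in $C$ to a continuous motion of the whole framework) is at the same level of informality as the paper's own closing sentence, so I would not count that as a gap.
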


\begin{cor}\label{t:cor}
A compact connected real algebraic curve is the continuous trace of a single joint of a pinned linkage. 
\end{cor}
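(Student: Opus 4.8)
The plan is to deduce Theorem \ref{t:kempeuniversality} from Theorem \ref{t:speedmatch} (or its continuous companion Theorem \ref{t:kempeContsFiniteDegree}) by exhibiting $C$, a pathwise-connected compact piece of a real algebraic curve, as the image of a continuous path. First I would observe that $C = V(f)\cap D$-component is a compact, pathwise-connected subset of $\bR^2$. Because $C$ is compact, connected, and locally path-connected (real algebraic sets are triangulable, hence locally contractible away from singularities, and the finitely many singular points do not destroy local path-connectedness of a connected component), it is in fact a Peano continuum. By the Hahn--Mazurkiewicz theorem, every such space is the continuous image of the unit interval: there is a continuous surjection $\gamma:[0,1]\to C$.

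Given this surjection, the second step is simply to feed $\gamma$ into Theorem \ref{t:kempeContsFiniteDegree}. That theorem produces a countable pinned planar bar-joint framework $(G,p)$ with a distinguished joint $p_C$ such that, over the admissible motions, the set of positions occupied by $p_C$ is exactly $\gamma([0,1]) = C$. Since the continuous trace of $p_C$ is by definition the set of all positions it may occupy in a continuous motion, and the theorem pins down this set to be $C$, the joint $p_C$ has continuous trace equal to $C$, which is precisely the assertion. (If one prefers a finite framework, one would instead need $C$ to be not merely a Peano continuum but a polynomial or rational image of $[0,1]$ and invoke Theorem \ref{t:speedmatch}; but the disc-truncated algebraic component is generally not such an image, so the continuous-curve version is the appropriate engine.)

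The main obstacle, and the point requiring genuine care, is verifying the topological hypotheses of Hahn--Mazurkiewicz: I must confirm that $C$ is locally connected, not merely connected and compact. The danger lies at the boundary circle $\partial D$ and at singular points of $V(f)$, where the intersection $V(f)\cap D$ could in principle accumulate badly or fail to be locally path-connected. To handle this cleanly I would appeal to the semialgebraic (or subanalytic) structure of $V(f)\cap D$: by the triangulation theorem for semialgebraic sets, $V(f)\cap D$ admits a finite simplicial decomposition, so each connected component is a finite simplicial complex, hence automatically a compact, connected, locally path-connected space, i.e.\ a Peano continuum. One should also address the edge case where the relevant component meets $\partial D$ in a way that is not semialgebraic; this is avoided by noting that $D$ is itself semialgebraic, so $V(f)\cap D$ is semialgebraic and the triangulation applies.

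Finally, Corollary \ref{t:cor} follows immediately as a special case. A compact connected real algebraic curve $C$ is bounded, so it is contained in some closed disc $D$ with $C = V(f)\cap D$, and $C$ is already a single pathwise-connected component of this intersection; applying Theorem \ref{t:kempeuniversality} gives a pinned linkage whose distinguished joint has continuous trace equal to $C$. No further argument is needed beyond choosing $D$ large enough to contain $C$.
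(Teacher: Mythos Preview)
Your final paragraph---deducing the Corollary from Theorem~\ref{t:kempeuniversality} by enclosing the compact connected curve in a large enough closed disc---is correct and is exactly how the paper obtains it: the Corollary is stated with no separate proof, as an immediate specialisation of Theorem~\ref{t:kempeuniversality}.

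The difficulty lies in the preceding three paragraphs, where you propose to \emph{re-prove} Theorem~\ref{t:kempeuniversality} via Hahn--Mazurkiewicz and Theorem~\ref{t:kempeContsFiniteDegree}. This route does not deliver what the Corollary asserts. In this paper a \emph{linkage} is by definition a \emph{finite} bar-joint framework (see the opening paragraph of the Introduction), and both Theorem~\ref{t:kempeuniversality} and Corollary~\ref{t:cor} are statements about finite pinned linkages. Theorem~\ref{t:kempeContsFiniteDegree}, however, produces a \emph{countably infinite} framework; you explicitly acknowledge this and set the finite case aside, but that is precisely the case the Corollary demands. So the engine you have chosen proves a weaker statement (continuous trace by an infinite framework), not the Corollary as written.

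The paper's proof of Theorem~\ref{t:kempeuniversality} avoids this by following Kempe's classical scheme: pass to angular coordinates $(\theta,\phi)$ via a two-bar parallelogram, expand $f(a\cos\theta+b\cos\phi,\,a\sin\theta+b\sin\phi)$ as a finite trigonometric sum, realise each term with angle adders and separation translators, and then constrain the resulting output joint to a line segment with a lineariser. All of these building blocks are finite, so the assembled framework is a genuine (finite) pinned linkage. Your Hahn--Mazurkiewicz argument is a pleasant way to see that $C$ is a Peano continuum, but it does not substitute for this finite construction.
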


Abbott \cite{abb} provided proofs for these results by augmenting Kempe's original linkage construction to ensure that the parallelogram and contraparallelogram components have unique motions. In this section we give a similar completion and discuss some related results. 

The variety  $V(f)$  may contain branching points and so $C$ need not be equal to the image of a closed interval under a polynomial (or rational) function. 
This second branching issue was not considered by Kempe but is accommodated for in the hypotheses here.

Note that neither one of Theorem \ref{t:king} and   Theorem \ref{t:kempeuniversality}  subsumes the other, even for simple curves.
Indeed, the closed simple  curve satisfying the equation $x^4+y^4-1=0$  is  not polynomially (or rationally) parametrisable and yet Theorem \ref{t:kempeuniversality} provides a continuous trace linkage for it. On the other hand, if $C$ is a simple curve subset of the 
polynomially parametrised  curve  $t \to (3(3-t^2), t(3-t^2)), t\in \bR,$ which contains the crossover point $(0,0)$, as a non-endpoint, then $C$ does not satisfy the requirements of Theorem \ref{t:kempeuniversality}.

\subsection{Angle adder} We make use of a simple \emph{angle adding linkage}. In Figure
\ref{f:angleadder} the equality $|AB|=|A'B'|$ is forced by a separation copier so that the output bar for $OB'$ is at angle $\theta +\phi$. This is the sum of two input angles, for $OB$ and $OA'$, where the joints $A,B,A',B'$ are equidistant from $O$. Kempe indicates angle adders of essentially this design in the penultimate page of his book \cite{kem-book}.

\begin{center}
\begin{figure}[ht]
\centering
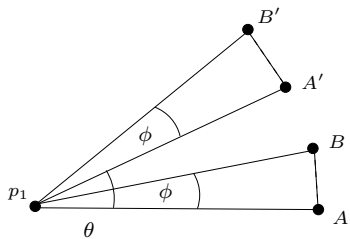
\caption{Angle adding linkage.}
\label{f:angleadder}
\end{figure}
\end{center}


\begin{proof}[The proof of Theorem \ref{t:kempeuniversality}]
Let
\[
f(x,y)= \sum_{0\leq i+j\leq n} a_{ij}x^iy^j
\] 
We may assume that $C$ lies in the first quadrant and that there are positive integers $a, b$ such that each point $(x, y)=\gamma(t)$ on $C$ is given by a unique pair $\theta, \phi$ with
\[
x= a\cos \theta +b\cos \phi,\quad  y = a\sin \theta +b\sin \phi
\]

Substituting and using standard trigonometric formulae obtain a representation
\begin{align*}
F(\theta,\phi) & = f(a\cos \theta +b\cos \phi, a\sin \theta +b\sin \phi)\\
 & = A_{0,0}+\sum_{0< r+s\leq n}A_{r,s}\cos(r\theta+\sigma_{r,s}s\phi + \psi_{r,s}) 
\end{align*}
where $\sigma_{r,s}\in \{-1,1\}, \psi_{r,s}\in \{0,\pi/2\}$.

Consider now any total ordering  of the nonzero coefficients $A_{r,s}$ in the sum which we may assume are positive by adjusting $\psi_{r,s}$ appropriately.  To construct the desired bar-joint framework $(G,p)$ fix $\theta, \phi$ for some point $(x,y)$ on the set $C$ and  write $p_C = p_C(\theta, \phi)$ for a joint at this location.
Let $\G_0$ be the corresponding $4$-bar parallelogram bar-joint framework, as in Figure \ref{f:kempe1}. The linkage construction (this is Kempe's main idea) is an inverse construction in which we add building block linkages, guided by the terms of $F(\theta, \phi)$, so that as $p_C(\theta, \phi)$ moves on the component $C$ a specific output joint is constrained to the line $x=-A_{0,0}$.

For the first nonzero coefficient $A_{r,s}$ in the ordering 
use an angle adding linkage to construct a 
linkage $\G_1'$ which includes 


(i) the pinned joints $p_1=(0,0), p_2=(1,0)$, 

(ii) the two bars of $\G_0$ that are incident to $p_1$, as angle inputs,

(iii) an output bar connecting $p_1$ and $p_3$, where $p_3$ has polar coordinates
\[
(A_{r,s}, r\theta+\sigma_{r,s}s\phi + \psi_{r,s})
\]


\begin{center}
\begin{figure}[ht]
\centering
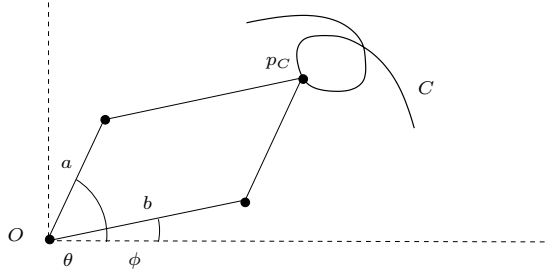
\caption{Changing to $\theta, \phi$ coordinates.}
\label{f:kempe1}
\end{figure}
\end{center}

Let $\G_1$ be the union of $\G_0$ and  $\G_1'$. 

Consider the next term with nonzero coefficient $A_{r,s}$.  Repeat the construction of $\G_1'$  to similarly create a joint $p_4'$ with the appropriate polar coordinates. Moreover using  separation translators create a joint $p_4$ such that
$p_4-p_3=p_4'-p_1$.

Continue in this manner to obtain, finally, a composite linkage $\G$ which includes a finite sequence of joints $p_3, p_4, \dots $ terminating in a joint, $p_L= p_L(\theta, \phi)$ say. By the construction the $x$-coordinate of this joint is equal to
\[
\sum_{0< r+s\leq n}A_{r,s}\cos(r\theta+\sigma_{r,s}s\phi + \psi_{r,s})
\]
It follows that if  $p_C$ moves continuously on the set $C$ then $p_L$ moves on a closed line segment of the line $x=-A_{0,0}$.

To complete the construction of $(G,p)$  add bars and joints to constrain $p_L$ to the line segment and to constrain $p_C$ to the closed disc. The first constraint may be achieved by 
a lineariser construction together with a tethering $2$ bar linkage connection to a fixed joint at the midpoint of the line segment. (Fixed joints may be constructed by connections to the pinned joints $p_1, p_2$.) The second constraint
may be achieved by adding a tethering $2$ bar linkage to constrain the separation between $p_C$ and  a fixed joint at the centre of the disc.

It remains to observe that for $(G,p)$, with its fixed joints, the continuous trace of $p_C$ is equal to $C$. This follows from the construction and the maximal connectedness hypothesis for $C$. These imply that in any continuous motion, $p_C(\theta',\phi')$ lies on $C$ if and only if $p_L(\theta', \phi')$
lies on $L$.
\end{proof}

Kapovich and Millson in fact consider the entire configuration space of a pinned linkage in which case the set of corresponding positions, or \emph{orbit}, of a particular joint may be a  disconnected  set. In other words, one lifts the tracing joint from the plane, reassembles the linkage, except for the pinned joints, into a new \emph{equivalent} planar position, and continues to trace, repeating the process until nothing new can be drawn. Abbott has pointed out that corrected Kempe proofs  extend naturally to cover this context also, and so one may obtain the fact, due to Kapovich and Millson, that any compact real algebraic curve (ie., a compact real variety $V(f)$) is realisable as the \emph{orbit} of a single joint of a pinned planar linkage. This is a corollary of the next theorem.

Some care must be taken when comparing different variants of Kempe universality since joints may be said to  \emph{describe} $C$,  \emph{trace} $C$,  have \emph{orbit} or \emph{workspace} equal to $C$, and so forth. 
Abbott uses the terminology \emph{drawable} to describe a set which coincides with the orbit of the joint
and he obtained the following theorem by
using both braced contraparallelograms and parallelograms.

\begin{thm}\label{t:kapmill}
Let $f \in \bR[x, y]$ be a polynomial, and
let $D$ be a closed disk in the plane. Then there exists a planar linkage that draws the set
$D \cap V(f)$.
\end{thm}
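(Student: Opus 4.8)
The plan is to reuse, essentially verbatim, the inverse linkage construction from the proof of Theorem \ref{t:kempeuniversality}, but to track the full configuration space of the resulting framework rather than a single continuous motion, so that the distinguished set becomes the \emph{orbit} of $p_C$. Since a linkage \emph{draws} a set precisely when that set equals the orbit of the distinguished joint, the goal is to produce $(G,p)$ with $\operatorname{orbit}(p_C)=D\cap V(f)$. First I would arrange the parametrisation to cover the whole disc: after translating so that $D$ is centred at the origin (replacing $f(x,y)$ by the polynomial $f(x-c_1,y-c_2)$), choose the two bar lengths of the base parallelogram $\G_0$ equal, $a=b=\rho$, with $2\rho$ at least the radius of $D$. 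Then $(\theta,\phi)\mapsto(a\cos\theta+b\cos\phi,\ a\sin\theta+b\sin\phi)$ has image the closed disc of radius $2\rho$, which contains $D$, and a generic interior point has exactly the two elbow preimages corresponding to the two planar assemblies of the two-bar arm. The uniqueness of $(\theta,\phi)$ imposed in Theorem \ref{t:kempeuniversality} is deliberately dropped here: the several preimages of a point are precisely the distinct configurations whose union constitutes the orbit.

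Next I would form, as before, the trigonometric expansion
\[
F(\theta,\phi)=A_{0,0}+\sum_{0<r+s\leq n}A_{r,s}\cos(r\theta+\sigma_{r,s}s\phi+\psi_{r,s}),
\]
and assemble angle adders and separation translators guided by its terms to produce a joint $p_L$ whose $x$-coordinate equals $\sum_{0<r+s\leq n}A_{r,s}\cos(r\theta+\sigma_{r,s}s\phi+\psi_{r,s})$. Now, rather than restricting $p_L$ to a line \emph{segment}, I would add a lineariser constraining $p_L$ to the entire line $x=-A_{0,0}$, together with a tethering two-bar linkage constraining $p_C$ to the closed disc $D$. With these two constraints a planar assembly of $(G,p)$ exists with $p_C$ at a prescribed point $(x,y)$ if and only if $(x,y)\in D$ and $F(\theta,\phi)=0$ for the angles $\theta,\phi$ of the two bars of $\G_0$, that is, if and only if $(x,y)\in D\cap V(f)$.

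The orbit identity then splits into two inclusions. For $D\cap V(f)\subseteq\operatorname{orbit}(p_C)$, given $(x,y)\in D\cap V(f)$ I would pick any $(\theta,\phi)$ with $(x,y)=(a\cos\theta+b\cos\phi,\ a\sin\theta+b\sin\phi)$ and assemble $\G_0$ and every building block accordingly; since $f(x,y)=0$ forces $F(\theta,\phi)=0$, the joint $p_L$ lands on $x=-A_{0,0}$ and all constraints are met, exhibiting a legitimate configuration with $p_C$ at $(x,y)$. For the reverse inclusion I would read off from an arbitrary configuration the angles $\theta,\phi$ of the bars of $\G_0$: the disc tether forces $p_C\in D$ and the lineariser forces the $x$-coordinate of $p_L$ to be $-A_{0,0}$, whence $F(\theta,\phi)=0$ and $f(x,y)=0$.

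The hard part, and the reason the orbit statement genuinely needs more than Theorem \ref{t:kempeuniversality}, lies in this reverse inclusion: it is valid only if every building block realises its defining identity in \emph{every} configuration of the configuration space, not merely along one continuous path. Unbraced parallelograms and the contraparallelograms internal to the angle adders and separation copiers admit flattened bifurcation states in which the angle-addition and separation-copying relations fail, and such states would contribute spurious points to the orbit of $p_C$. I would therefore brace each such component---prismatically for the parallelograms as in the separation translator, and by Abbott's contraparallelogram bracing for the angle-adder cells---so that in each component the output angle or separation is a single-valued function of its inputs throughout the whole configuration space. Verifying that these bracings simultaneously (i) leave every genuine $(\theta,\phi)$ with $F(\theta,\phi)=0$ realisable and (ii) exclude every flattened state is the substantive technical step, and is exactly the content of Abbott's correction that the theorem invokes.
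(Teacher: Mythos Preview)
The paper does not give its own proof of this theorem; it states it as Abbott's result and offers only a paragraph of commentary identifying the decisive requirement: the special joints $p_3,p_4,\dots,p_L$ must be uniquely determined by the pair $(\theta,\phi)$ across the \emph{entire} configuration space, not merely along one continuous motion, so that the building blocks' input--output relations hold in every assembly. Your proposal is precisely this extension of the Theorem~\ref{t:kempeuniversality} construction to the orbit setting. You correctly add the two points the paper does not spell out: one must drop the uniqueness of $(\theta,\phi)$ and choose the base arm so that its image covers all of $D$, and one must brace every component so that its defining relation persists in every configuration. This is the approach the paper attributes to Abbott, so your sketch matches the intended argument.

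One small mismatch with \emph{this} paper's components: you invoke ``Abbott's contraparallelogram bracing for the angle-adder cells'', but the angle adder in Section~4.1 is built from a separation copier (translator plus rotator) which involves only parallelograms and linearisers, not contraparallelograms. For the building blocks actually used here, the relevant bracings are the prismatic parallelogram bracings of Figure~\ref{f:copierC}, together with attention to the Peaucellier cells (using strict Peaucellier linkages to avoid the extremal degeneracies). Your remark is accurate for Kempe's original reversor/multiplicator components (which do use contraparallelograms, as discussed in Section~\ref{ss:kempe}), but not for the paper's own angle adder. Also, your constraint of $p_L$ to ``the entire line'' $x=-A_{0,0}$ is in practice to a sufficiently long segment of it, which is fine since the $y$-coordinate of $p_L$ is bounded by $\sum_{r,s}|A_{r,s}|$; it is worth saying so explicitly.
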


A key point for a Kempian proof of this  theorem is that not only must the bifurcation motions of parallelograms and contraparallelograms be ruled out but that the configuration space of the resulting pinned framework $(G,p)$ should similarly present no unintended bifurcations in the motion of the special joints $p_k$. Thus  the joints $p_3, p_4, \dots $ should be uniquely determined by the pair $\theta, \phi$ even if the configurations of the connecting building blocks are not uniquely determined. 
Note for example that the (braced) separation translator with input pair $\{A, B\}$ and output pair $\{A', B'\}$ has the property that $B'$ is determined by
$A, B$ and $A'$ even though the translator for this admits two configurations.

We also remark that there is a related diffeomorphic theory considered by  Kapovich and Millson \label{t:kapMillDifeo} which involves controlling the full configuration space of a linkage. For example it can be shown that any compact connected smooth manifold is diffeomorphic to a connected component of the configuration space of a pinned planar linkage.
Related developments in this direction may be found in Kourganoff \cite{kou}.

\subsection{Correcting Kempe}\label{ss:kempe}
 {How wrong was Kempe ?  His  "reversor" linkage, which performs angle doubling, is a pin-bar structure as indicated in Figure \ref{f:reversor}, with $2$ contraparallelogram substructures. The "multiplicator" linkage on the other hand performs a general angle addition $\alpha+\beta$. Concatenating these he obtains linkages that, crucially, can effect the multiplication $\theta \to r\theta$. The multiplicators themselves he obtains by superimposing two reversors by merging their reflection bar $OX$, as in Figure \ref{f:additor}.

\begin{center}
\begin{figure}[ht]
\centering
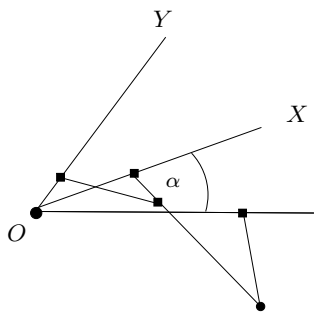
\caption{Kempe's reversor: the output bar $OY$ has angle $\alpha + \alpha$.}
\label{f:reversor}
\end{figure}
\end{center}

\begin{center}
\begin{figure}[ht]
\centering
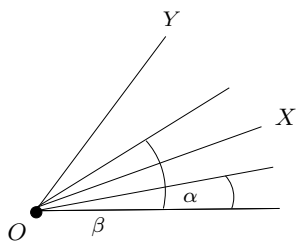
\caption{Kempe's multiplicator: the output bar $OY$ has angle $\alpha + \beta$.}
\label{f:additor}
\end{figure}
\end{center}

For the reversor there is no bifurcation of motion if $0<\alpha <\pi/4$, since the contraparallelograms remain nondegenerate in this range (and more). For the multiplicator there is degeneracy if $\beta = \alpha$ but there is no bifurcation if
$0<\alpha <\beta < \pi/4$. 

With these observations we note that by moving the origin a rather considerable distance to the left we may arrange that the lengths $a$ and $b$ (with $a$ much larger than $b$) are such  that for all pairs $\theta, \phi$ associated with points $(x,y)$ of the disc $D$ we have
\[
0< n\phi < \theta, \quad (n+1)\theta <\pi/4
\]
where $n$ is the degree of $f$.

We can now correct Kempe's argument in a minimal manner by indicating how to adjust the proof of Theorem \ref{t:kempeuniversality} by employing Kempe building block linkages. 

First connect $n-1$ separate additors to the input angle bars in order to have available output angles  $2\theta, \dots  , n\theta$. Also connect further additors to construct the angles
$k\phi$ for $2\leq k\leq n$,
and add reversors to construct $-2\phi, \dots , -n\phi$. Then  connect  additors to construct all the needed positive angles
$r\theta + \sigma_{r,s}s\phi$ with $1 \leq r, s \leq n$. 
In each case no degenerate contraparallelograms occur as $(x,y)$ moves in the disc $D$.
The addition of a fixed angle $\pi/2$ is also straightforward by a rigid connection. 

We may now construct $p_2, p_3, \dots , p_C$ as before by using a sequence of appropriate separation translators. 

It follows that Kempe's arguments have been closer
than  previously thought to showing that the curves describable by linkwork include all (connected) bounded curves of the $n^{th}$ degree. One may avoid the contraparallelogram degeneracies by moving the origin, as above, while the parallelogram degeneracies may be  corrected by the simple prismatic bracings indicated in Figure \ref{f:copierC}.}
\bigskip

\noindent \emph{Acknowledgement.} The results here are part of the EPSRC project \emph{Infinite bond-node frameworks}, EP/P01108X/1.



\end{document}